
%

\documentclass{amsproc}
\usepackage{breqn,float,amssymb,pdflscape,rotating}
\makeatletter
\@namedef{subjclassname@2020}{%
  \textup{2020} Mathematics Subject Classification}
\makeatother
\newtheorem{theorem}{Theorem}[section]

\theoremstyle{definition}

\newtheorem{example}[theorem]{Example}

\theoremstyle{remark}

\numberwithin{equation}{section}
\allowdisplaybreaks
\begin{document}

\title[Generalized double series]{A note on a generalized double series}


\author{Robert Reynolds}
\address[Robert Reynolds]{Department of Mathematics and Statistics, York University, Toronto, ON, Canada, M3J1P3}
\email[Corresponding author]{milver73@gmail.com}
\thanks{}


\subjclass[2020]{Primary  30E20, 33-01, 33-03, 33-04}

\keywords{double finite product, contour integration, gamma function, trigonometric function, Euler's constant, Catalan's constant}

\date{}

\dedicatory{}

\begin{abstract}
By employing contour integration the derivation of a generalized double finite series involving the Hurwitz-Lerch zeta function is used to derive closed form formulae in terms of special functions. We use this procedure to find special cases of the summation and product formulae in terms of the Hurwitz-Lerch zeta function, trigonometric functions and the gamma function. A short table of quotient gamma functions is produced for easy reading by interested readers.
\end{abstract}

\maketitle
\section{Introduction}
%
Finite sums of special functions are recorded in chapter 5 in \cite{prud}. Equations (5.3.10.1) and (5.3.10.2) in \cite{prud3} yield finite summations of the Hurwitz-zeta function $\Phi(z,s,a)$ in terms of the Hurwitz-zeta function. In \cite{reyn_ejpam} an infinite sum involving the Hurwitz-Lerch zeta function was studied. The underlying formula transformed by contour integration is the infinite series involving the tangent function whose angle is a multiple of 2 raised to an integer power. The work done in \cite{reyn_ejpam} was centred around evaluating an infinite sum in terms of special functions, while in the current paper the author studies double finite series of special functions. In the current paper, the underlying function transformed by contour integration has a more generalized angle form which allows for more interesting evaluations in term of trigonometric and special functions.\\\\
The reason behind this research, is to expand on literature involving the finite double sum and product of special functions, such as the gamma function. The novelty this work represents, lies in the derivation and evaluation involving the double finite product of quotient gamma functions which is not present in current literature to the best of my knowledge. We use a contour integral method and apply it to a generalized double finite summation formula involving the secant function to study the resulting double finite summation of the Hurwitz-Lerch zeta function. Since the Hurwitz-Lerch zeta function is a special function it has several composite functions which can yield new results by algebraic means. Some of these composite functions are the gamma function and other trigonometric functions. The objectives pursued in this work were the derivation of a double finite sum involving the Hurwitz-Lerch zeta function, along with double finite products involving the gamma function and a few functional identities for the Hurwitz-Lerch zeta function.\\\\
In this work we apply the contour integral method \cite{reyn4}, to the the generalized form of the finite secant sum on page 90 in \cite{hobson} to derive a generalized form involving the finite sum of the secant function, resulting in
\begin{multline}\label{eq:contour}
\frac{1}{2\pi i}\int_{C}\sum_{p=0}^{n}a^w w^{-1-k} \left(\left(1+(-1)^{1+z}\right) \sec \left((m+w) (1+2
   z)^p\right)\right. \\ \left.
   +\sum_{j=1}^{z}2 (-1)^{1+j+z} \cos \left(2 j (m+w) (1+2 z)^{-1+p}\right)\right. \\ \left. \sec
   \left((m+w) (1+2 z)^p\right)\right)dw\\
=\frac{1}{2\pi i}\int_{C}a^w w^{-k-1} \left(\sec
   \left((m+w) (2 z+1)^n\right)-\sec \left(\frac{m+w}{2 z+1}\right)\right)dw
\end{multline}
where $a,m,k\in\mathbb{C},Re(m+w)>0,n\in\mathbb{Z^{+}},z\in\mathbb{Z^{+}}$. Using equation (\ref{eq:contour}) the main Theorem to be derived and evaluated is given by
\begin{multline}
\sum_{p=0}^{n}\left((-1)^{z+1}+1\right) \left((2 z+1)^p\right)^k e^{i m (2 z+1)^p} \Phi \left(-e^{2 i m (2
   z+1)^p},-k,\frac{1}{2} \left(a (2 z+1)^{-p}+1\right)\right)\\
+\sum_{p=0}^{n}\sum_{j=1}^{z}(-1)^{j+z+1} \left((2 z+1)^p\right)^k e^{i m (-2 j+2
   z+1) (2 z+1)^{p-1}}\\
 \left(\Phi \left(-e^{2 i m (2 z+1)^p},-k,\frac{1}{2} \left(a (2 z+1)^{-p}+1-\frac{2 j}{2
   z+1}\right)\right)\right. \\ \left.
+e^{4 i j m (2 z+1)^{p-1}} \Phi \left(-e^{2 i m (2 z+1)^p},-k,\frac{j}{2 z+1}+\frac{1}{2} \left(a(2 z+1)^{-p}+1\right)\right)\right)\\
=\left((2 z+1)^n\right)^k e^{i m (2 z+1)^n} \Phi \left(-e^{2 i m (2z+1)^n},-k,\frac{1}{2} \left(a (2 z+1)^{-n}+1\right)\right)\\
-\left(\frac{1}{2 z+1}\right)^k e^{\frac{i m}{2 z+1}}
   \Phi \left(-e^{\frac{2 i m}{2 z+1}},-k,\frac{1}{2} (2 z a+a+1)\right)
\end{multline}
where the variables $k,a,m$ are general complex numbers and $n,z$ are positive integers. The derivations follow the method used by us in \cite{reyn4}. This method involves using a form of the generalized Cauchy's integral formula given by
\begin{equation}\label{intro:cauchy}
\frac{y^k}{\Gamma(k+1)}=\frac{1}{2\pi i}\int_{C}\frac{e^{wy}}{w^{k+1}}dw,
\end{equation}
where $y,w\in\mathbb{C}$ and $C$ is in general an open contour in the complex plane where the bilinear concomitant \cite{reyn4} is equal to zero at the end points of the contour. This method involves using a form of equation (\ref{intro:cauchy}) then multiplies both sides by a function, then takes the double finite sum of both sides. This yields a double finite sum in terms of a contour integral. Then we multiply both sides of equation (\ref{intro:cauchy}) by another function and take the infinite sum of both sides such that the contour integral of both equations are the same.
\section{The Hurwitz-Lerch zeta function}
We use equation (1.11.3) in \cite{erd} where $\Phi(z,s,v)$ is the Lerch function which is a generalization of the Hurwitz zeta $\zeta(s,v)$ and Polylogarithm functions $Li_{n}(z)$. In number theory and complex analysis, the Lerch function is a mathematical function that appears in many branches of mathematics and physics. It is named after Czech mathematician Mathias Lerch, who published a paper about the function in 1887. Numerous areas of mathematics, including number theory (especially in the investigation of the Riemann zeta function and its generalizations), complex analysis, and theoretical physics, all have uses for it. It can be used to express a variety of complex functions and series and is involved in numerous mathematical identities. The Lerch function has a series representation given by
\begin{equation}\label{knuth:lerch}
\Phi(z,s,v)=\sum_{n=0}^{\infty}(v+n)^{-s}z^{n}
\end{equation}
where $|z|<1, v \neq 0,-1,-2,-3,..,$ and is continued analytically by its integral representation given by
\begin{equation}\label{knuth:lerch1}
\Phi(z,s,v)=\frac{1}{\Gamma(s)}\int_{0}^{\infty}\frac{t^{s-1}e^{-(v-1)t}}{e^{t}-z}dt
\end{equation}
where $Re(v)>0$, and either $|z| \leq 1, z \neq 1, Re(s)>0$, or $z=1, Re(s)>1$.
\section{Contour integral representation for the finite sum of the Hurwitz-Lerch zeta functions}
In this section we derive the contour integral representations of the left-hand side and right-hand side of equation (\ref{eq:contour}) in terms of the Hurwtiz-Lerch zeta and trigonometric functions.
\subsection{Derivation of the generalized Hurwitz-Lerch contour integral for the secant function}
We use the method in \cite{reyn4}. Using equation (\ref{intro:cauchy})  we first replace $y$ by $\log (a)+i b (2 y+1)$ and multiply both sides by $-2 i (-1)^y e^{i b m (2 y+1)}$ then take the infinite sum over $y\in [0,\infty)$ and simplify in terms of the Hurwitz-Lerch zeta function to get
\begin{multline}\label{gen_sec}
-\frac{i 2^{k+1} (i b)^k e^{i b m} \Phi \left(-e^{2 i b m},-k,\frac{b-i \log (a)}{2 b}\right)}{\Gamma(k+1)}\\
   =\frac{1}{2\pi i}\sum_{y=0}^{\infty}\int_{C}(-1)^y a^w w^{-k-1} e^{i b (2 y+1) (m+w)}dw\\
   =\frac{1}{2\pi i}\int_{C}\sum_{y=0}^{\infty}(-1)^y a^w w^{-k-1} e^{i b (2 y+1) (m+w)}dw\\
   =-\frac{1}{2\pi i}\int_{C}i a^w w^{-k-1} \sec (b (m+w))dw
\end{multline}
from equation (1.232.2) in \cite{grad} where $Re(w+m)>0$ and $Im\left(m+w\right)>0$ in order for the sums to converge. We apply Tonelli's theorem for multiple sums, see page 177 in \cite{gelca} as the summands are of bounded measure over the space $\mathbb{C} \times [0,\infty)$.
\subsection{Derivation of a generalized Hurwitz-Lerch zeta function in terms of the product of the cosine and secant function contour integral}
We use the method in \cite{reyn4}. Using a generalization of Cauchy's integral formula (\ref{intro:cauchy}) we first replace $y$ by $ \log (a)+i x+y$ then multiply both sides by $e^{mxi}$ then form a second equation by replacing $x$ by $-x$ and add both equations to get
\begin{multline}
\frac{e^{-i m x} \left(e^{2 i m x} (\log (a)+i x+y)^k+(\log (a)-i
   x+y)^k\right)}{\Gamma(k+1)}\\
   =\frac{1}{2\pi i}\int_{C}2 w^{-k-1} e^{w (\log (a)+y)} \cos (x (m+w))dw
\end{multline}
Next we replace $y$ by $i b (2 y+1)$ and multiply both sides by $(-1)^y e^{i b m (2 y+1)}$ and take the infinite sum over $y\in[0,\infty)$ and simplify in terms of the Hurwitz-Lerch zeta function to get
\begin{multline}\label{gen_cos_sec}
\frac{2^k (i b)^k e^{i m (b-x)} \left(\Phi \left(-e^{2 i b m},-k,\frac{b-x-i \log (a)}{2 b}\right)+e^{2 i m x}
   \Phi \left(-e^{2 i b m},-k,\frac{b+x-i \log (a)}{2 b}\right)\right)}{\Gamma(k+1)}\\
   =\frac{1}{2\pi i}\sum_{y=0}^{\infty}\int_{C}2 (-1)^y a^w w^{-k-1} e^{i b (2 y+1) (m+w)} \cos (x (m+w))dw\\
   =\frac{1}{2\pi i}\int_{C}\sum_{y=0}^{\infty}2 (-1)^y a^w w^{-k-1} e^{i b (2 y+1) (m+w)} \cos (x (m+w))dw\\
   =\frac{1}{2\pi i}\int_{C}a^w w^{-k-1} \sec (b (m+w)) \cos (x (m+w))dw
   \end{multline}
from equation (1.232.2) and (1.411.3) in \cite{grad} where $Re(w+m)>0$ and $Im\left(m+w\right)>0$ in order for the sums to converge. We apply Tonelli's theorem for sums and integrals, see page 177 in \cite{gelca} as the summand and integral are of bounded measure over the space $\mathbb{C} \times [0,\infty)$.
\subsection{Derivation of the contour integrals}
In this section we will use equations (\ref{gen_sec}) and (\ref{gen_cos_sec}) and simple substitutions to derive the contour integrals in equation (\ref{eq:contour}).
\subsubsection{Left-hand side first contour integral}
Use equation (\ref{gen_sec}) and replace $b$ by $(2 z+1)^p$ then multiply both sides by $(-1)^{z+1}+1$ and take the finite sum over $p\in[0,n]$ to get;
\begin{multline}\label{lsfc}
\sum_{p=0}^{n}\frac{1}{k!}2^{k+1} \left((-1)^{z+1}+1\right) \left(i (2 z+1)^p\right)^k e^{i m(2 z+1)^p}\\
 \Phi \left(-e^{2 i m (2 z+1)^p},-k,\frac{1}{2} (2 z+1)^{-p}
   \left((2 z+1)^p-i \log (a)\right)\right)\\
   =\frac{1}{2\pi i}\int_{C}\sum_{p=0}^{n}\left((-1)^{z+1}+1\right) a^w
   w^{-k-1} \sec \left((m+w) (2 z+1)^p\right)dw
\end{multline}
\subsubsection{Left-hand side second contour integral}
Use equation (\ref{gen_cos_sec}) and replace $x$ by $2 j (2 z+1)^{p-1}$, and $b$ by $(2 z+1)^p$ then multiply both sides by $2 (-1)^{j+z+1}$ and take the finite sums over $p\in[0,n]$ and $j\in[1,z]$ to get;
\begin{multline}\label{lssc}
\sum_{p=0}^{n}\frac{1}{k!}2^{k+1} (-1)^{j+z+1} \left(i (2 z+1)^p\right)^k \exp \left(i m
   \left((2 z+1)^p-2 j (2 z+1)^{p-1}\right)\right)\\
    \sum_{j=1}^{z}\left(\Phi \left(-e^{2 i m (2
   z+1)^p},-k,\frac{1}{2} (2 z+1)^{-p} \left(-2 j (2 z+1)^{p-1}+(2 z+1)^p-i \log
   (a)\right)\right)\right. \\ \left.
   +e^{4 i j m (2 z+1)^{p-1}} \Phi \left(-e^{2 i m (2
   z+1)^p},-k,\frac{1}{2} (2 z+1)^{-p} \left(2 j (2 z+1)^{p-1}+(2 z+1)^p-i \log
   (a)\right)\right)\right)\\
   =\frac{1}{2\pi i}\int_{C}\sum_{p=0}^{n}\sum_{j=1}^{z}2 a^w (-1)^{j+z+1} w^{-k-1} \sec \left((m+w) (2
   z+1)^p\right) \cos \left(2 j (m+w) (2 z+1)^{p-1}\right)dw
\end{multline}
\subsubsection{Right-hand side first contour integral}
Use equation (\ref{gen_sec}) and replace $b$ by $\frac{1}{2 z+1}$ then multiply both sides by $-1$ to get;
\begin{multline}\label{rsfc}
-\frac{1}{k!}2^{k+1} \left(\frac{i}{2 z+1}\right)^k e^{\frac{i m}{2 z+1}} \Phi
   \left(-e^{\frac{2 i m}{2 z+1}},-k,\frac{1}{2} (2 z+1) \left(\frac{1}{2 z+1}-i
   \log (a)\right)\right)\\
   =-\frac{1}{2\pi i}\int_{C}a^w w^{-k-1} \sec \left(\frac{m+w}{2z+1}\right)dw
\end{multline}

\subsubsection{Right-hand side second contour integral}
Use equation (\ref{gen_sec}) and replace $b$ by $(2 z+1)^n$ to get;
\begin{multline}\label{rssc}
\frac{1}{k!}2^{k+1} \left(i (2 z+1)^n\right)^k e^{i m (2 z+1)^n} \Phi
   \left(-e^{2 i m (2 z+1)^n},-k,\frac{1}{2} (2 z+1)^{-n} \left((2 z+1)^n-i \log
   (a)\right)\right)\\
   =\frac{1}{2\pi i}\int_{C}a^w w^{-k-1} \sec \left((m+w) (2 z+1)^n\right)dw
\end{multline}
%
%
\section{Finite sum of the Hurwitz-Lerch zeta function and evaluations}
In this section we will derive and evaluate formulae involving the finite double sum of the Hurwitz-Lerch zeta function in terms other special functions, trigonometric functions and fundamental constants.
\begin{theorem}
For all $k,a\in\mathbb{C},-1< Re(m)<1,n\in\mathbb{Z_{+}},z\in\mathbb{Z_{+}}$ then,
\begin{multline}\label{eq:theorem}
\sum_{p=0}^{n}\left((-1)^{z+1}+1\right) \left((2 z+1)^p\right)^k e^{i m (2 z+1)^p} \Phi \left(-e^{2 i m (2
   z+1)^p},-k,\frac{1}{2} \left(a (2 z+1)^{-p}+1\right)\right)\\
+\sum_{p=0}^{n}\sum_{j=1}^{z}(-1)^{j+z+1} \left((2 z+1)^p\right)^k e^{i m (-2 j+2
   z+1) (2 z+1)^{p-1}}\\
 \left(\Phi \left(-e^{2 i m (2 z+1)^p},-k,\frac{1}{2} \left(a (2 z+1)^{-p}+1-\frac{2 j}{2
   z+1}\right)\right)\right. \\ \left.
+e^{4 i j m (2 z+1)^{p-1}} \Phi \left(-e^{2 i m (2 z+1)^p},-k,\frac{j}{2 z+1}+\frac{1}{2} \left(a(2 z+1)^{-p}+1\right)\right)\right)\\
=\left((2 z+1)^n\right)^k e^{i m (2 z+1)^n} \Phi \left(-e^{2 i m (2z+1)^n},-k,\frac{1}{2} \left(a (2 z+1)^{-n}+1\right)\right)\\
-\left(\frac{1}{2 z+1}\right)^k e^{\frac{i m}{2 z+1}}
   \Phi \left(-e^{\frac{2 i m}{2 z+1}},-k,\frac{1}{2} (2 z a+a+1)\right)
\end{multline}
\end{theorem}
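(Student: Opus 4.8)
The plan is to assemble the theorem directly from the four contour-integral identities established in Section~3, using equation~(\ref{eq:contour}) as the bridge between the left-hand and right-hand sides. First I would observe that equation~(\ref{eq:contour}) equates two contour integrals: the integral of the grouped secant/cosine-secant sum over $p\in[0,n]$ and the integral of the telescoped difference $\sec((m+w)(2z+1)^n)-\sec((m+w)/(2z+1))$. The right-hand side of~(\ref{eq:contour}) is exactly the sum of the contour integrals appearing in~(\ref{rssc}) and~(\ref{rsfc}), while the left-hand side of~(\ref{eq:contour}) is the sum of the contour integrals appearing in~(\ref{lsfc}) and~(\ref{lssc}). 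Hence, by linearity of the contour integral and the validity of~(\ref{eq:contour}) on the chosen contour $C$, the sum of the closed-form (Hurwitz-Lerch) left-hand sides of~(\ref{lsfc}) and~(\ref{lssc}) equals the difference of the closed-form right-hand sides of~(\ref{rssc}) and~(\ref{rsfc}).

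Next I would carry out the algebraic simplification that turns this raw equality into the stated form~(\ref{eq:theorem}). Every one of the four equations~(\ref{lsfc}), (\ref{lssc}), (\ref{rsfc}), (\ref{rssc}) carries a common prefactor $\frac{1}{k!}2^{k+1}i^k$ (absorbed into the powers $(i(2z+1)^p)^k$, $(i(2z+1)^n)^k$, $(i/(2z+1))^k$ respectively); dividing the whole identity through by $\frac{2^{k+1}i^k}{k!}$ clears these uniformly and converts $(i(2z+1)^p)^k/k!$-type factors into $((2z+1)^p)^k$. Then I would make the substitution that replaces the argument $\tfrac12(2z+1)^{-p}((2z+1)^p - i\log a)$ by $\tfrac12(a(2z+1)^{-p}+1)$; this is the change of variable $a \mapsto $ (exponential rescaling) that sends $-i\log a$ to $a$, i.e. writing the third slot of $\Phi$ in the normalized shifted form. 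The same substitution simplifies the exponents: $e^{im((2z+1)^p - 2j(2z+1)^{p-1})}$ becomes $e^{im(-2j+2z+1)(2z+1)^{p-1}}$ after factoring $(2z+1)^{p-1}$ out of the exponent, and the third arguments in~(\ref{lssc}) collapse to $\tfrac12(a(2z+1)^{-p}+1-\tfrac{2j}{2z+1})$ and $\tfrac{j}{2z+1}+\tfrac12(a(2z+1)^{-p}+1)$. Collecting the two $p$-sums and the $p,j$-double sum on the left and the two boundary terms on the right yields precisely~(\ref{eq:theorem}).

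The main obstacle, and the step deserving the most care, is justifying the interchange of summation and integration together with the vanishing of the bilinear concomitant at the endpoints of $C$, so that equation~(\ref{eq:contour}) genuinely holds as an identity of contour integrals and can be read off term by term. Section~3 invokes Tonelli's theorem (page~177 in \cite{gelca}) for the infinite sums over $y$ under the conditions $Re(w+m)>0$ and $Im(m+w)>0$; I would need to confirm that the hypothesis $-1<Re(m)<1$ in the theorem, combined with the admissible choice of contour $C$, keeps all the geometric series $\sum_y (-1)^y e^{ib(2y+1)(m+w)}$ within their region of convergence simultaneously for every value $b\in\{(2z+1)^p : 0\le p\le n\}\cup\{(2z+1)^{-1},(2z+1)^n\}$ and for the shifted angles $b(m+w)\pm 2j(2z+1)^{p-1}$ that appear through the cosine factors. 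Since the finite $p$- and $j$-sums are unproblematic, once the underlying summation formula~(\ref{eq:contour}) is secured on $C$ the remainder is bookkeeping. I would close the argument by noting that the Hurwitz-Lerch zeta function, initially defined for $|z|<1$, extends by analytic continuation (equation~(\ref{knuth:lerch1})) to the values $z=-e^{2im(2z+1)^p}$ etc.\ on the unit circle appearing in~(\ref{eq:theorem}), so the stated identity holds throughout the claimed parameter range by continuation in $k$, $a$, and $m$.
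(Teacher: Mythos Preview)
Your proposal is correct and follows essentially the same route as the paper: equate the contour-integral sides of (\ref{lsfc})$+$(\ref{lssc}) with those of (\ref{rsfc})$+$(\ref{rssc}) via equation~(\ref{eq:contour}), then pass to the Hurwitz--Lerch sides and simplify. You spell out in more detail the division by the common prefactor $2^{k+1}i^k/\Gamma(k+1)$ and the relabeling $-i\log a\mapsto a$ that the paper compresses into the phrase ``simplify relative to equation~(\ref{eq:contour}) and the Gamma function,'' but the underlying argument is identical.
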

\begin{proof}
Observe that the addition of the right-hand sides of equations (\ref{lsfc}) and (\ref{lssc}), is equal to the addition of the right-hand sides of equations (\ref{rsfc}) and (\ref{rssc}) so we may equate the left-hand sides and simplify relative to equation (\ref{eq:contour}) and the Gamma function to yield the stated result.
\end{proof}
\begin{example}
The degenerate case.
\begin{multline}
\sum_{p=0}^{n}\sec \left(m (2 z+1)^p\right) \left(\sum_{j=1}^{z}2 e^{i \pi  (j+z)} \cos \left(2 j m (2 z+1)^{p-1}\right)+e^{i \pi 
   z}-1\right)\\
   =\sec \left(\frac{m}{2 z+1}\right)-\sec \left(m (2 z+1)^n\right)
\end{multline}
\end{example}
\begin{proof}
Use equation (\ref{eq:theorem}) and set $k=0$ and simplify using entry (2) in Table below equation (64:12:7) in \cite{atlas}. 
\end{proof}
\begin{example}
Double finite product of quotient gamma functions, where $a\in\mathbb{C},n\in\mathbb{Z_{+}}$, $z$ is even. Multiple finite products involving the gamma function are recorded in [p. 214] in \cite{krysicki} and  [p.173] in \cite{kal}.
\begin{multline}\label{eq1}
\prod_{p=0}^{n}\prod_{j=1}^{z}\left(\frac{\Gamma \left(\frac{1}{4} \left(a (2 z+1)^{-p}+1-\frac{2 j}{2 z+1}\right)\right) \Gamma
   \left(\frac{1}{4} \left(a (2 z+1)^{-p}+1+\frac{2 j}{2 z+1}\right)\right)}{\Gamma \left(\frac{1}{4} \left(a (2
   z+1)^{-p}+3-\frac{2 j}{2 z+1}\right)\right) \Gamma \left(\frac{1}{4} \left(a (2 z+1)^{-p}+3+\frac{2 j}{2
   z+1}\right)\right)}\right)^{(-1)^j}\\
   =\frac{(2 z+1)^{\frac{n+1}{2}} \Gamma \left(\frac{1}{4} (2 z a+a+1)\right)
   \Gamma \left(\frac{1}{4} \left(a (2 z+1)^{-n}+3\right)\right)}{\Gamma \left(\frac{1}{4} (2 z a+a+3)\right) \Gamma
   \left(\frac{1}{4} \left(a (2 z+1)^{-n}+1\right)\right)}
\end{multline}
\end{example}
\begin{proof}
Use equation (\ref{eq:theorem}) and set $m=0$ and simplify in terms of the Hurwitz zeta function using entry (4) in Table below (64:12:7) in \cite{atlas}. Next take the first partial derivative with respect to $k$ and set $k=0$ and simplify in terms of the log-gamma function using equation (25.11.18) in \cite{dlmf}. Next we take the exponential of both sides and simplify in terms of the gamma function.
\end{proof}
\begin{example}
Product of gamma functions in terms of the square root of odd numbers greater than equal to 5. We also present a product formula for the ratio the Elliptic integral of the first kind see [p.1140] in \cite{eric}.
\begin{multline}\label{eq2}
\frac{\Gamma \left(\frac{a+1}{4}\right) \Gamma \left(\frac{1}{4} (2 z a+a+3)\right) }{\Gamma \left(\frac{a+3}{4}\right) \Gamma
   \left(\frac{1}{4} (2 z a+a+1)\right)}\prod_{j=1}^{z}\left(\frac{\Gamma
   \left(\frac{1}{4} \left(a-\frac{2 j}{2 z+1}+1\right)\right) \Gamma \left(\frac{1}{4} \left(a+\frac{2 j}{2
   z+1}+1\right)\right)}{\Gamma \left(\frac{1}{4} \left(a-\frac{2 j}{2 z+1}+3\right)\right) \Gamma \left(\frac{1}{4}
   \left(a+\frac{2 j}{2 z+1}+3\right)\right)}\right)^{(-1)^j}\\
   =\sqrt{2 z+1}
\end{multline}
\end{example}
\begin{proof}
Use equation (\ref{eq1}) and set $p=n=0$ and simplify.
\end{proof}
\begin{example}
A Hurwitz-Lerch zeta functional equation with first parameter involving the cubic root of z. 
\begin{multline}\label{eq3}
\Phi \left(-(i z)^{2/3},s,a\right)\\
=9^{-s} \left(3^s \Phi \left(z^2,s,\frac{a}{3}\right)+(i z)^{2/3} \left(\Phi
   \left(z^6,s,\frac{a+1}{9}\right)-2\times 3^s \Phi \left(z^2,s,\frac{a+1}{3}\right)\right.\right. \\ \left.\left.
   +3^s (i z)^{2/3} \Phi
   \left(z^2,s,\frac{a+2}{3}\right)+z^4 \Phi \left(z^6,s,\frac{a+7}{9}\right)+z^2 \Phi
   \left(z^6,s,\frac{a+4}{9}\right)\right)\right)
\end{multline}
\end{example}
\begin{proof}
Use equation (\ref{eq:theorem}) and set $n= 1,z= 1,m= -i \log (z),a= e^{i a}$ and simplify. Next replace $a= \frac{2}{3} \left(a-\frac{1}{2}\right),k= -s,z=zi$ and simplify.
\end{proof}
\begin{example}
Double finite sum in terms of trigonometric functions. 
\begin{multline}\label{eq4}
\sum_{p=0}^{n}\sum_{j=1}^{z}2 e^{i \pi  (j+z)} (2 z+1)^{p-1} \sec \left(x (2 z+1)^p\right)\\
 \left((2 z+1) \tan \left(x (2 z+1)^p\right) \cos\left(2 j x (2 z+1)^{p-1}\right)-2 j \sin \left(2 j x (2 z+1)^{p-1}\right)\right)\\
+\sum_{p=0}^{n}\left(1-e^{i \pi  z}\right) (2
   z+1)^p \tan \left(x (2 z+1)^p\right) \sec \left(x (2 z+1)^p\right)\\
=\frac{1}{8 z+4}\left(\sec ^2\left(\frac{x}{2 z+1}\right)
   \left((2 z+1)^{n+1} \left(2 \sin \left(x (2 z+1)^n\right)-\sin \left(x \left(\frac{2}{2 z+1}-(2
   z+1)^n\right)\right)\right.\right.\right. \\ \left.\left.\left.
+\sin \left(x \left((2 z+1)^n+\frac{2}{2 z+1}\right)\right)\right)-\sin \left(x
   \left(\frac{1}{2 z+1}-2 (2 z+1)^n\right)\right)\right.\right. \\ \left.\left.
-\sin \left(x \left(2 (2 z+1)^n+\frac{1}{2 z+1}\right)\right)-2 \sin
   \left(\frac{x}{2 z+1}\right)\right) \sec ^2\left(x (2 z+1)^n\right)\right)
\end{multline}
\end{example}
\begin{proof}
Use equation (\ref{eq:theorem}) and set $k=1,a=1,m=x$ and apply the method in section (8) in \cite{reyn_ejpam}.
\end{proof}
\begin{example} 
A functional equation involving the Hurwitz-Lerch zeta function.
\begin{multline}\label{eq5}
\Phi \left(-\sqrt[3]{-z},s,a\right)\\
=3^{-s} \left(\Phi \left(z,s,\frac{a}{3}\right)-\sqrt[3]{-z} \Phi
   \left(z,s,\frac{a+1}{3}\right)+(-z)^{2/3} \Phi \left(z,s,\frac{a+2}{3}\right)\right)
\end{multline}
\end{example}
\begin{proof}
Use equation (\ref{eq:theorem}) and set $n= 0,z= 1,m= \log (z)/(2i),a= e^{i a}$ and simplify. Next replace $a= \frac{2}{3} \left(a-\frac{1}{2}\right),k= -s$ and simplify.
\end{proof}
\begin{example} 
Finite product involving the product of trigonometric functions.
\begin{multline}
\prod_{p=0}^{n}\prod_{j=1}^{z}\exp \left(i (-1)^{j+z} (2 z+1)^{-p} e^{i m (-2 j+2 z+1) (2 z+1)^{p-1}}
   \left(\Phi \left(-e^{2 i m (2 z+1)^p},1,\frac{1}{2}-\frac{j}{2 z+1}\right)\right.\right. \\ \left.\left.
+e^{4 i jm (2 z+1)^{p-1}} \Phi \left(-e^{2 i m (2 z+1)^p},1,\frac{j}{2z+1}+\frac{1}{2}\right)\right)\right)\\
\prod_{p=0}^{n}\left(1-i e^{i m (2
   z+1)^p}\right)^{\left((-1)^{z+1}+1\right) (2 z+1)^{-p}} \left(1+i e^{i m (2
   z+1)^p}\right)^{\left((-1)^z-1\right) (2 z+1)^{-p}}\\
=\left(i \cot \left(\frac{2 m+2
   \pi  z+\pi }{8 z+4}\right)\right)^{2 z+1} \left(-i \tan \left(\frac{1}{4} \left(2 m
   (2 z+1)^n+\pi \right)\right)\right)^{(2 z+1)^{-n}}
\end{multline}
\end{example}
\begin{proof}
Use equation (\ref{eq:theorem}) and set $k=-1,a=1$ and simplify using entry (3) in Table below (64:12:7) in atlas.
\end{proof}
\section{Special cases involving the double finite sum of the Hurwitz-Lerch zeta function}
In this section we derive special cases of the Hurwitz-Lerch zeta function  in terms of special functions and Catalan's constant $C$.
\begin{example} 
Double finite sum in terms of Catalan's constant $C$. 
\begin{multline}\label{eq_catalan}
\sum_{p=0}^{n}\sum_{j=1}^{z}(2 z+1)^{-2 p} \left(16 C \left((-1)^z-1\right)+(-1)^{j+z} \left(\psi
   ^{(1)}\left(\frac{-2 j+2 z+1}{8 z+4}\right)\right.\right. \\ \left.\left. 
   +\psi ^{(1)}\left(\frac{2 j+2 z+1}{8
   z+4}\right)-\psi ^{(1)}\left(\frac{-2 j+6 z+3}{8 z+4}\right)-\psi ^{(1)}\left(\frac{2 j+6
   z+3}{8 z+4}\right)\right)\right)\\
   =16 C \left((2 z+1)^2-(2 z+1)^{-2 n}\right)
\end{multline}
\end{example}
\begin{proof}
Use equation (\ref{eq:theorem}) and set $k=-2,a=1,m=0$ and simplify using equation (4) in \cite{guillera}.
\end{proof}
\begin{example} 
Finite sum in terms of Catalan's constant $C$ and the digamma function $\psi ^{(1)}(s)$. 
\begin{multline}
\sum_{j=1}^{z}\frac{(2 z+1)^2 (-1)^{j+z} }{4 z(z+1)}\left(\psi ^{(1)}\left(\frac{-2 j+2 z+1}{8 z+4}\right)+\psi ^{(1)}\left(\frac{2 j+2 z+1}{8z+4}\right)\right. \\ \left.
-\psi ^{(1)}\left(\frac{-2 j+6 z+3}{8 z+4}\right)-\psi ^{(1)}\left(\frac{2 j+6 z+3}{8 z+4}\right)\right)\\
   =\frac{4 C (2 z+1)^2 \left(4 z-(-1)^z+5\right)}{z+1}
\end{multline}
\end{example}
\begin{proof}
Use equation (\ref{eq_catalan}) and take the limit as $n\to \infty$ and simplify.
\end{proof}
\begin{example} 
Finite sum in terms of the first partial derivative of the Hurwitz-Lerch zeta function.
\begin{multline}
\sum_{j=1}^{z}(-1)^j \left(\Phi'\left(-1,-1,\frac{1}{2}-\frac{j}{2
   z+1}\right)+\Phi'\left(-1,-1,\frac{j}{2 z+1}+\frac{1}{2}\right)\right)\\
=\frac{C \left(-2
   z+(-1)^{-z}-1\right)}{\pi  (2 z+1)}
\end{multline}
\end{example}
\begin{proof}
Use equation (\ref{eq:theorem}) and take the first partial derivative with respect to $k$ and set $k=1,m=0,a=1$ and simplify using equation (20) in \cite{guillera}.
\end{proof}
\begin{example} 
Double finite sum involving Euler's constant $\gamma$. 
\begin{multline}\label{eq:euler}
\sum_{p=0}^{n}\sum_{j=1}^{z}2 i e^{i \pi  (j+z)} (2 z+1)^{-p} \left(-\Phi'\left(-1,1,\frac{1}{2}-\frac{j}{2
   z+1}\right)-\Phi'\left(-1,1,\frac{j}{2 z+1}+\frac{1}{2}\right)\right. \\ \left.
   +\pi  \sec \left(\frac{\pi  j}{2z+1}\right) \log \left(i (2 z+1)^p\right)\right)-i \pi  \left((-1)^z-1\right) \sum_{p=0}^{n}(2 z+1)^{-p} \log \left(-\frac{i e^{\gamma } \pi
   ^3 (2 z+1)^{-p}}{32 \Gamma \left(\frac{5}{4}\right)^4}\right)\\
   =-i \pi  (2 z+1)^{-n} \left((2 z+1)^{n+1} \log \left(-\frac{8 i
   e^{\gamma } \pi ^3 (2 z+1)}{\Gamma \left(\frac{1}{4}\right)^4}\right)+\log \left(\frac{32 i e^{-\gamma } \Gamma
   \left(\frac{5}{4}\right)^4 (2 z+1)^n}{\pi ^3}\right)\right)
\end{multline}
\end{example}
\begin{proof}
Use equation (\ref{eq:theorem}) and take the first partial derivative with respect to $k$ and set $k=-1,m=0,a=1$ and simplify using equation (53) in \cite{guillera}.
\end{proof}
\begin{example} 
The exponential of the Hurwitz-Lerch zeta function in terms of the log-gamma function.
\begin{multline}
\exp \left(-\frac{13\left(\Phi'\left(-1,1,\frac{1}{6}\right)+\Phi'\left(-1,1,\frac{5}{6}\right)\right)}{54 \pi }\right)=\frac{2^{11/27} e^{-13 \gamma /27} \Gamma \left(\frac{1}{4}\right) \Gamma
   \left(\frac{5}{4}\right)^{25/27}}{3^{13/36} \pi ^{13/9}}
\end{multline}
\end{example}
\begin{proof}
Use equation (\ref{eq:euler}) and set $n=2,z=1$ and simplify.
\end{proof}
\begin{example} 
A double finite sum involving Catalan's constant $C$.
\begin{multline}
\sum_{p=0}^{n}4 \left((-1)^z-1\right) (2 z+1)^{-2 p} \left(-\Phi'\left(-1,2,\frac{1}{2}\right)+4 C \log
   \left(i (2 z+1)^p\right)\right)\\
   +\sum_{p=0}^{n}\sum_{j=1}^{z}(-1)^{j+z} (2 z+1)^{-2 p} \left(-4
   \Phi'\left(-1,2,\frac{1}{2}-\frac{j}{2 z+1}\right)-4
   \Phi'\left(-1,2,\frac{j}{2 z+1}+\frac{1}{2}\right)\right. \\ \left.
   +\left(\psi ^{(1)}\left(\frac{-2 j+2 z+1}{8
   z+4}\right)+\psi ^{(1)}\left(\frac{2 j+2 z+1}{8 z+4}\right)-\psi ^{(1)}\left(\frac{-2 j+6 z+3}{8 z+4}\right)\right.\right. \\ \left.\left.
   -\psi
   ^{(1)}\left(\frac{2 j+6 z+3}{8 z+4}\right)\right) \log \left(i (2 z+1)^p\right)\right)\\
   =4 (2 z+1)^{-2 n}
   \left(\Phi'\left(-1,2,\frac{1}{2}\right)-4 C \log \left(i (2 z+1)^n\right)\right)\\
   +4 (2 z+1)^2
   \left(-\Phi'\left(-1,2,\frac{1}{2}\right)+4 C \log \left(\frac{i}{2 z+1}\right)\right)
\end{multline}
\end{example}
\begin{proof}
Use equation (\ref{eq:theorem}) and take the first partial derivative with respect to $k$ and set $k=-2,m=0,a=1$ and simplify using equation (4) in \cite{guillera}.
\end{proof}
\begin{example} 
Double finite product involving tangent and cotangent functions.
\begin{multline}
\prod_{p=0}^{n}\prod_{j=1}^{z}\exp \left(\frac{\pi  i (-1)^{j+z} \sec \left(\frac{j \pi }{1+2 z}\right)}{(1+2 z)^p}\right) \prod_{p=0}^{n}\tan^{\frac{-1+(-1)^z}{(1+2 z)^p}}\left(\frac{1}{4} \pi  \left(1+2 (1+2 z)^{1+p}\right)\right)\\
=i
   \left(-(-1)^{3/4}\right)^{\frac{(1+2 z)^{-n} \left(-1+(-1)^z-\left(-1+(-1)^z\right) (1+2 z)^{1+n}\right)}{z}} (-1)^z
   \left(i \cot \left(\frac{1}{4} \pi  \left(1+2 (1+2 z)^{1+n}\right)\right)\right)^{(1+2 z)^{-n}}
\end{multline}
\end{example}
\begin{proof}
Use equation (\ref{eq:theorem}) and set $k= -1,m= \pi  (2 z+1),a= 1$ and simplify using entry (1) in Table below (64:12:7) in \cite{atlas}.
\end{proof}
\begin{example} 
Double finite product involving the tangent function.
\begin{multline}
\prod_{p=0}^{n}\prod_{j=1}^{z}\exp \left(-\frac{i (-1)^{j+z}}{(1+2 z)^p} \left(e^{i \pi  (1+2 z)^{-1-n+p} (1-2 j+2 z)} \Phi \left(-e^{2 i \pi  (1+2z)^{-n+p}},1,\frac{1}{2}-\frac{j}{1+2 z}\right)\right.\right. \\ \left.\left.
+e^{i \pi  (1+2 z)^{-1-n+p} (1+2 j+2 z)} \Phi \left(-e^{2 i \pi  (1+2z)^{-n+p}},1,\frac{1}{2}+\frac{j}{1+2 z}\right)\right)\right)\\
\prod_{p=0}^{n} \left(-i \tan \left(\frac{1}{4} \pi \left(1+2 (1+2 z)^{-n+p}\right)\right)\right)^{\frac{-1+(-1)^z}{(1+2 z)^p}}\\
=-e^{-\frac{\pi  i}{2 (1+2 z)^n}}
   \left(-i \tan \left(\frac{1}{4} \pi  \left(1+2 (1+2 z)^{-1-n}\right)\right)\right)^{2 z} \left(i \tan
   \left(\frac{1}{4} \pi  \left(1+2 (1+2 z)^{-1-n}\right)\right)\right)
\end{multline}
\end{example}
\begin{proof}
Use equation (\ref{eq:theorem}) and set $k= -1,m= \pi  (2 z+1)^{-n},a= 1$ and simplify using entry (1) in Table below (64:12:7) in \cite{atlas}.
\end{proof}
\begin{example} 
A double finite product involving the gamma function and table of special cases. 
\begin{multline}\label{eq:gamma}
\prod_{p=0}^{n}\prod_{j=1}^{z}\left(i (2 z+1)^p\right)^{(-1)^{j+z}+\frac{1}{2} \left((-1)^z-1\right)} \left(\frac{\Gamma \left(\frac{-2 j+6
   z+3}{8 z+4}\right) \Gamma \left(\frac{j}{4 z+2}+\frac{3}{4}\right)}{\Gamma \left(\frac{-2 j+2 z+1}{8 z+4}\right)
   \Gamma \left(\frac{j}{4 z+2}+\frac{1}{4}\right)}\right)^{(-1)^{j+z}}\\
=(2 z+1)^{\frac{1}{2} (-n-1)} \left(\frac{3
   \Gamma \left(-\frac{3}{4}\right)}{\Gamma \left(-\frac{1}{4}\right)}\right)^{(-1)^z-1}
\end{multline}
\end{example}
\begin{proof}
Use equation (\ref{eq:theorem}) and set $m=0,a=1$ and simplify in terms of the Hurwitz zeta function using entry (4) in Table below (64:12:7) in \cite{atlas}. Next take the first partial derivative with respect to $k$ and set $k=0$ and simplify in terms of the log-gamma function using equation (25.11.18) in \cite{dlmf}.
\end{proof}
\newpage
\section{Table of quotient gamma functions}
In this section we look at simplified forms of equation (\ref{eq:gamma}).
\begin{center}
\begin{table}[h]
\setlength{\tabcolsep}{9pt} 
\renewcommand{\arraystretch}{2.7} 
\begin{tabular}{ l | c r }
  \hline
  $\frac{\Gamma \left(\frac{3}{20}\right) \Gamma \left(\frac{7}{20}\right) \Gamma \left(\frac{11}{20}\right)
   \Gamma \left(\frac{19}{20}\right)}{\Gamma \left(\frac{1}{20}\right) \Gamma \left(\frac{9}{20}\right) \Gamma
   \left(\frac{13}{20}\right) \Gamma \left(\frac{17}{20}\right)}$ & $\frac{1}{\sqrt{5}}$  \\
  $\frac{\Gamma \left(\frac{3}{20}\right)^2 \Gamma \left(\frac{7}{20}\right)^2 \Gamma \left(\frac{11}{20}\right)^2
   \Gamma \left(\frac{19}{20}\right)^2}{\Gamma \left(\frac{1}{20}\right)^2 \Gamma \left(\frac{9}{20}\right)^2 \Gamma
   \left(\frac{13}{20}\right)^2 \Gamma \left(\frac{17}{20}\right)^2}$ & $\frac{1}{5}$ \\
  $\frac{\Gamma \left(\frac{1}{12}\right)^2 \Gamma \left(\frac{7}{36}\right)^2 \Gamma \left(\frac{11}{36}\right)^2
   \Gamma \left(\frac{5}{12}\right)^2 \Gamma \left(\frac{19}{36}\right)^2 \Gamma \left(\frac{23}{36}\right)^2 \Gamma
   \left(\frac{31}{36}\right)^2 \Gamma \left(\frac{35}{36}\right)^2}{\Gamma \left(\frac{1}{36}\right)^2 \Gamma
   \left(\frac{5}{36}\right)^2 \Gamma \left(\frac{13}{36}\right)^2 \Gamma \left(\frac{17}{36}\right)^2 \Gamma
   \left(\frac{7}{12}\right)^2 \Gamma \left(\frac{25}{36}\right)^2 \Gamma \left(\frac{29}{36}\right)^2 \Gamma
   \left(\frac{11}{12}\right)^2}$ & $\frac{1}{9}$ \\
  $\frac{\Gamma \left(\frac{1}{12}\right)^3 \Gamma \left(\frac{7}{36}\right)^3 \Gamma \left(\frac{11}{36}\right)^3
   \Gamma \left(\frac{5}{12}\right)^3 \Gamma \left(\frac{19}{36}\right)^3 \Gamma \left(\frac{23}{36}\right)^3 \Gamma
   \left(\frac{31}{36}\right)^3 \Gamma \left(\frac{35}{36}\right)^3}{\Gamma \left(\frac{1}{36}\right)^3 \Gamma
   \left(\frac{5}{36}\right)^3 \Gamma \left(\frac{13}{36}\right)^3 \Gamma \left(\frac{17}{36}\right)^3 \Gamma
   \left(\frac{7}{12}\right)^3 \Gamma \left(\frac{25}{36}\right)^3 \Gamma \left(\frac{29}{36}\right)^3 \Gamma
   \left(\frac{11}{12}\right)^3}$ & $\frac{1}{27}$ \\
  $\frac{\Gamma \left(\frac{1}{12}\right)^4 \Gamma \left(\frac{7}{36}\right)^4 \Gamma \left(\frac{11}{36}\right)^4
   \Gamma \left(\frac{5}{12}\right)^4 \Gamma \left(\frac{19}{36}\right)^4 \Gamma \left(\frac{23}{36}\right)^4 \Gamma
   \left(\frac{31}{36}\right)^4 \Gamma \left(\frac{35}{36}\right)^4}{\Gamma \left(\frac{1}{36}\right)^4 \Gamma
   \left(\frac{5}{36}\right)^4 \Gamma \left(\frac{13}{36}\right)^4 \Gamma \left(\frac{17}{36}\right)^4 \Gamma
   \left(\frac{7}{12}\right)^4 \Gamma \left(\frac{25}{36}\right)^4 \Gamma \left(\frac{29}{36}\right)^4 \Gamma
   \left(\frac{11}{12}\right)^4}$ & $\frac{1}{81}$ \\
  $-\frac{\Gamma \left(\frac{3}{28}\right) \Gamma \left(\frac{11}{28}\right) \Gamma \left(\frac{15}{28}\right)
   \Gamma \left(\frac{19}{28}\right) \Gamma \left(\frac{23}{28}\right) \Gamma \left(\frac{27}{28}\right)}{\Gamma
   \left(\frac{1}{28}\right) \Gamma \left(\frac{5}{28}\right) \Gamma \left(\frac{9}{28}\right) \Gamma
   \left(\frac{13}{28}\right) \Gamma \left(\frac{17}{28}\right) \Gamma \left(\frac{25}{28}\right)}$ & $\frac{\Gamma
   \left(-\frac{1}{4}\right)^2}{9 \sqrt{7} \Gamma \left(-\frac{3}{4}\right)^2}$ \\
  $\frac{\Gamma \left(\frac{3}{20}\right)^3 \Gamma \left(\frac{7}{20}\right)^3 \Gamma \left(\frac{11}{20}\right)^3
   \Gamma \left(\frac{19}{20}\right)^3}{\Gamma \left(\frac{1}{20}\right)^3 \Gamma \left(\frac{9}{20}\right)^3 \Gamma
   \left(\frac{13}{20}\right)^3 \Gamma \left(\frac{17}{20}\right)^3}$ & $\frac{1}{5 \sqrt{5}}$ \\
   $\frac{\Gamma \left(\frac{3}{20}\right)^4 \Gamma \left(\frac{7}{20}\right)^4 \Gamma \left(\frac{11}{20}\right)^4
   \Gamma \left(\frac{19}{20}\right)^4}{\Gamma \left(\frac{1}{20}\right)^4 \Gamma \left(\frac{9}{20}\right)^4 \Gamma
   \left(\frac{13}{20}\right)^4 \Gamma \left(\frac{17}{20}\right)^4}$ & $\frac{1}{25}$ \\
    $\frac{\Gamma \left(\frac{3}{20}\right)^5 \Gamma \left(\frac{7}{20}\right)^5 \Gamma \left(\frac{11}{20}\right)^5
   \Gamma \left(\frac{19}{20}\right)^5}{\Gamma \left(\frac{1}{20}\right)^5 \Gamma \left(\frac{9}{20}\right)^5 \Gamma
   \left(\frac{13}{20}\right)^5 \Gamma \left(\frac{17}{20}\right)^5}$ & $\frac{1}{25 \sqrt{5}}$ \\
    $\frac{\Gamma \left(\frac{1}{12}\right)^6 \Gamma \left(\frac{7}{36}\right)^6 \Gamma \left(\frac{11}{36}\right)^6
   \Gamma \left(\frac{5}{12}\right)^6 \Gamma \left(\frac{19}{36}\right)^6 \Gamma \left(\frac{23}{36}\right)^6 \Gamma
   \left(\frac{31}{36}\right)^6 \Gamma \left(\frac{35}{36}\right)^6}{\Gamma \left(\frac{1}{36}\right)^6 \Gamma
   \left(\frac{5}{36}\right)^6 \Gamma \left(\frac{13}{36}\right)^6 \Gamma \left(\frac{17}{36}\right)^6 \Gamma
   \left(\frac{7}{12}\right)^6 \Gamma \left(\frac{25}{36}\right)^6 \Gamma \left(\frac{29}{36}\right)^6 \Gamma
   \left(\frac{11}{12}\right)^6}$ & $\frac{1}{729}$ 
\end{tabular}
\caption{Table of quotient gamma functions}
\label{table:kysymys}
\end{table}
\end{center}
\begin{example} 
A Gosper relation for a $q$-trigonometric form in terms of quotient gamma functions given on page (80) in \cite{gosper} and entry (1) in Table \ref{table:kysymys}.
\begin{multline}
\prod_{n=0}^{\infty}\frac{(10 n+1) (10 n+3) (10 n+7) (10 n+9)}{(10 n+2) (10 n+4) (10 n+6) (10 n+8)}=\frac{\sin \left(\frac{3 \pi}{10}\right)}{2 \sin ^2\left(\frac{2 \pi }{5}\right)}\\
=\frac{\Gamma \left(\frac{3}{20}\right)
   \Gamma \left(\frac{7}{20}\right) \Gamma \left(\frac{11}{20}\right) \Gamma \left(\frac{19}{20}\right)}{\Gamma
   \left(\frac{1}{20}\right) \Gamma \left(\frac{9}{20}\right) \Gamma \left(\frac{13}{20}\right) \Gamma
   \left(\frac{17}{20}\right)}
   =\frac{1}{\sqrt{5}}
\end{multline}
\end{example}
\begin{example} 
Double finite product involving exponential of the Hurwitz-Lerch zeta function and quotient tangent functions. 
\begin{multline}\label{eq:bdh}
\prod_{p=0}^{n}\prod_{j=1}^{z}\exp \left(i (-1)^{j+z} (1+2 z)^{-p} \left(e^{i m (1+2 z)^{-1+p} (1-2 j+2 z)} \Phi \left(-e^{2 i m (1+2z)^p},1,\frac{1}{2}-\frac{j}{1+2 z}\right)\right.\right. \\ \left.\left.
+e^{i m (1+2 z)^{-1+p} (1+2 j+2 z)} \Phi \left(-e^{2 i m (1+2z)^p},1,\frac{1}{2}+\frac{j}{1+2 z}\right)\right.\right. \\ \left.\left.
-e^{i r (1+2 z)^{-1+p} (1-2 j+2 z)} \Phi \left(-e^{2 i r (1+2z)^p},1,\frac{1}{2}-\frac{j}{1+2 z}\right)\right.\right. \\ \left.\left.
-e^{i r (1+2 z)^{-1+p} (1+2 j+2 z)} \Phi \left(-e^{2 i r (1+2
z)^p},1,\frac{1}{2}+\frac{j}{1+2 z}\right)\right)\right)\\
\prod_{p=0}^{n}\left(\frac{\tan \left(\frac{1}{4} \left(\pi +2 m (1+2
   z)^p\right)\right)}{\tan \left(\frac{1}{4} \left(\pi +2 r (1+2z)^p\right)\right)}\right)^{-\left(\left(-1+(-1)^z\right) (1+2 z)^{-p}\right)}\\
=\left(\frac{\tan \left(\frac{2 m+\pi
   +2 \pi  z}{4+8 z}\right)}{\tan \left(\frac{\pi +2 r+2 \pi  z}{4+8 z}\right)}\right)^{-1-2 z} \left(\frac{\tan
   \left(\frac{1}{4} \left(\pi +2 m (1+2 z)^n\right)\right)}{\tan \left(\frac{1}{4} \left(\pi +2 r (1+2
   z)^n\right)\right)}\right)^{(1+2 z)^{-n}}
\end{multline}
\end{example}
\begin{proof}
Use equation (\ref{eq:theorem}) and form a second equation by replacing $m\to r$ and take the difference of both these equations then set $k=-1,a=1$ and simplify using entry (3) of Section (64:12) in \cite{atlas}.
\end{proof}
\begin{example} 
Finite product involving exponential of the Hurwitz-Lerch zeta function and quotient tangent functions. 
\begin{multline}\label{eq:bdh1}
\prod_{j=1}^{z}\exp \left(i (-1)^{j+z} \left(e^{i m \left(1-\frac{2 j}{1+2 z}\right)} \Phi \left(-e^{2 i
   m},1,\frac{1}{2}-\frac{j}{1+2 z}\right)\right.\right. \\ \left.\left.
+e^{i m \left(1+\frac{2 j}{1+2 z}\right)} \Phi \left(-e^{2 i
   m},1,\frac{1}{2}+\frac{j}{1+2 z}\right)-e^{i r \left(1-\frac{2 j}{1+2 z}\right)} \Phi \left(-e^{2 i
   r},1,\frac{1}{2}-\frac{j}{1+2 z}\right)\right.\right. \\ \left.\left.
-e^{i r \left(1+\frac{2 j}{1+2 z}\right)} \Phi \left(-e^{2 i
   r},1,\frac{1}{2}+\frac{j}{1+2 z}\right)\right)\right)\\
=\frac{\tan \left(\frac{1}{4} (2 m+\pi )\right)
   \left(\frac{\tan \left(\frac{2 m+\pi +2 \pi  z}{4+8 z}\right)}{\tan \left(\frac{\pi +2 r+2 \pi  z}{4+8
   z}\right)}\right)^{-1-2 z}}{\tan \left(\frac{1}{4} (\pi +2 r)\right) \left(\frac{\tan \left(\frac{1}{4} (2 m+\pi
   )\right)}{\tan \left(\frac{1}{4} (\pi +2 r)\right)}\right)^{1-(-1)^z}}
\end{multline}
\end{example}
\begin{proof}
Use equation (\ref{eq:bdh}) and set $p=n=0$ and simplify, where $z$ is any positive integer.
\end{proof}
\section{An example involving the exponential of the hypergeometric function in terms of reciprocal angles}
\begin{example} 
Exponential of the hypergeometric function involving reciprocal angles.
\begin{multline}
\exp \left(i \left(6 e^{\left.\frac{i}{3}\right/r} \, _2F_1\left(\frac{1}{6},1;\frac{7}{6};-e^{2 i/r}\right)-6 e^{\frac{i r}{3}} \, _2F_1\left(\frac{1}{6},1;\frac{7}{6};-e^{2 i r}\right)\right.\right. \\ \left.\left.
+\frac{6}{5}e^{\left.\frac{5 i}{3}\right/r} \, _2F_1\left(\frac{5}{6},1;\frac{11}{6};-e^{2 i/r}\right)-\frac{6}{5} e^{\frac{5 i r}{3}} \, _2F_1\left(\frac{5}{6},1;\frac{11}{6};-e^{2 i r}\right)\right)\right)\\
   =\tan\left(\frac{1}{4} (2 r+\pi )\right) \tan ^3\left(\frac{1}{12} (2 r+3 \pi )\right) \cot \left(\frac{1}{4} \left(\frac{2}{r}+\pi \right)\right) \cot ^3\left(\frac{1}{12} \left(\frac{2}{r}+3 \pi\right)\right)
\end{multline}
\end{example}
\begin{proof}
Use equation (\ref{eq:bdh1}) and replace $m$ by $1/r$ and set $z=1$ and simplify.
\end{proof}
\begin{figure}[H]
\includegraphics[scale=0.7]{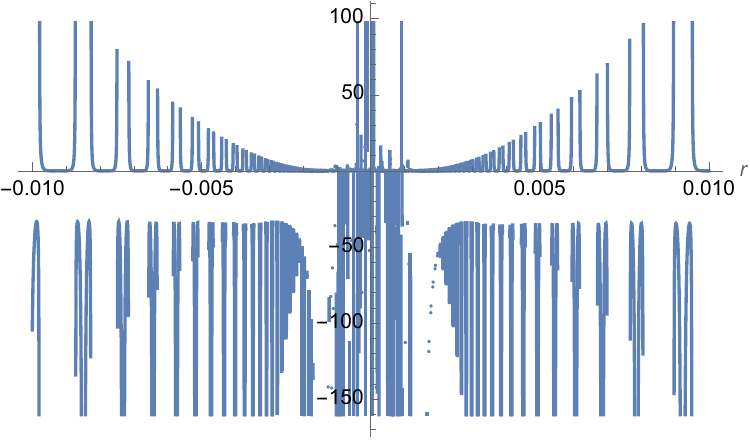}
\caption{Plot of  $f(r)=\frac{\tan ^3\left(\frac{\pi }{4}+\frac{r}{6}\right) \tan \left(\frac{1}{4} (\pi +2 r)\right)}{\tan ^3\left(\frac{\pi }{4}+\frac{1}{6 r}\right) \tan \left(\frac{1}{4} \left(\pi +\frac{2}{r}\right)\right)}$, $r\in\mathbb{R}$.}
   \label{fig:fig2}
\end{figure}
\vspace{-6pt}
\begin{figure}[H]
\includegraphics[scale=0.7]{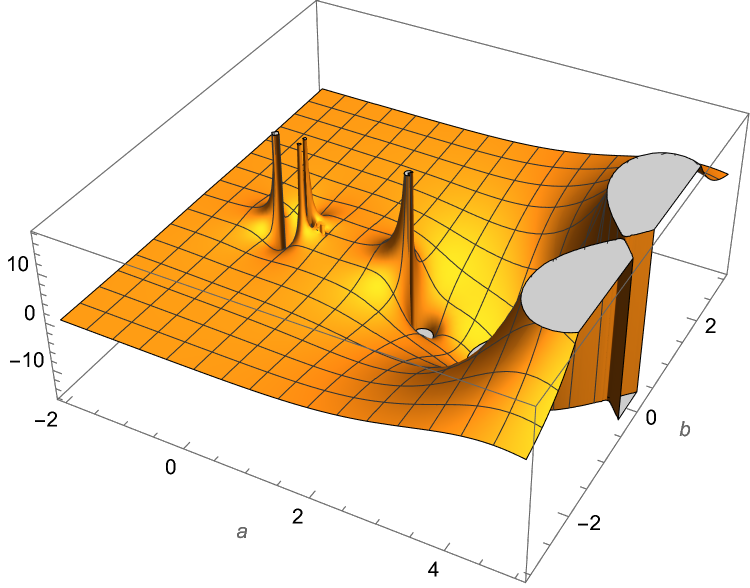}
\caption{Plot of  $f(r)=Re\left( \frac{\tan ^3\left(\frac{\pi }{4}+\frac{r}{6}\right) \tan \left(\frac{1}{4} (\pi +2 r)\right)}{\tan ^3\left(\frac{\pi }{4}+\frac{1}{6 r}\right) \tan \left(\frac{1}{4} \left(\pi +\frac{2}{r}\right)\right)}\right),r\in\mathbb{C}$.}
   \label{fig:fig3}
\end{figure}
\vspace{-6pt}
\begin{figure}[H]
\includegraphics[scale=0.7]{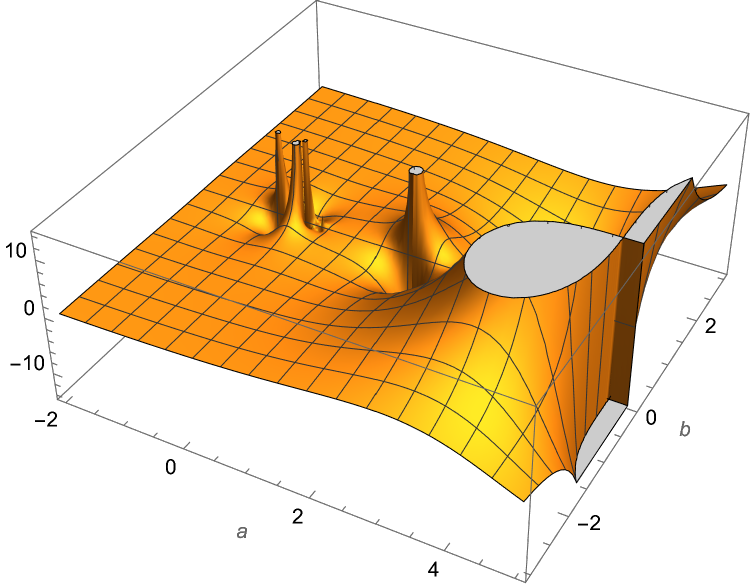}
\caption{Plot of  $f(r)=Im\left( \frac{\tan ^3\left(\frac{\pi }{4}+\frac{r}{6}\right) \tan \left(\frac{1}{4} (\pi +2 r)\right)}{\tan ^3\left(\frac{\pi }{4}+\frac{1}{6 r}\right) \tan \left(\frac{1}{4} \left(\pi +\frac{2}{r}\right)\right)}\right),r\in\mathbb{C}$.}
   \label{fig:fig4}
\end{figure}
\vspace{-6pt}
\begin{figure}[H]
\includegraphics[scale=0.7]{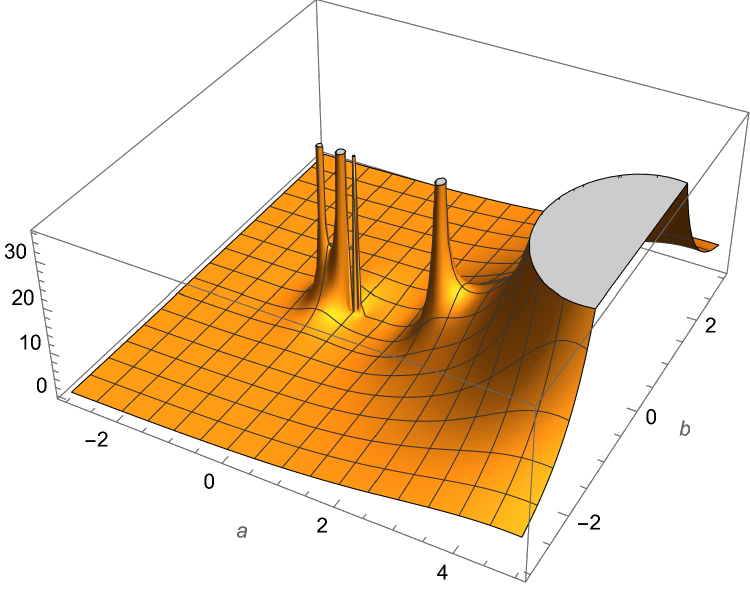}
\caption{Plot of  $f(r)=Abs\left( \frac{\tan ^3\left(\frac{\pi }{4}+\frac{r}{6}\right) \tan \left(\frac{1}{4} (\pi +2 r)\right)}{\tan ^3\left(\frac{\pi }{4}+\frac{1}{6 r}\right) \tan \left(\frac{1}{4} \left(\pi +\frac{2}{r}\right)\right)}\right),r\in\mathbb{C}$.}
   \label{fig:fig4}
\end{figure}
\vspace{-6pt}
\section{Determining the first partial derivative of the Hurwitz-Lerch zeta function}
In this example we derive a finite product expression which can be used to determine the first partial derivatives of the Hurwitz-Lerch zeta function with respect to the first parameter and the second parameter is zero.
\begin{example} 
Finite product of quotient gamma functions raised to complex power. 
\begin{multline}\label{eq:bdh2}
\prod_{j=1}^{z}\left(\frac{\Gamma \left(\frac{1}{4} \left(a-\frac{2 j}{2 z+1}+3\right)\right)}{\Gamma \left(\frac{1}{4} \left(a-\frac{2 j}{2 z+1}+1\right)\right)}\right)^{(-1)^{j+z} e^{-\frac{2 i \pi  j}{2 z+1}}}
   \left(\frac{\Gamma \left(\frac{1}{4} \left(a+\frac{2 j}{2 z+1}+3\right)\right)}{\Gamma \left(\frac{1}{4} \left(a+\frac{2 j}{2 z+1}+1\right)\right)}\right)^{(-1)^{j+z} e^{\frac{2 i \pi  j}{2 z+1}}}\\
=(4
   z+2)^{\frac{1}{2} \sec \left(\frac{\pi }{2 z+1}\right)} \left(\frac{\Gamma \left(\frac{a+1}{4}\right)}{\Gamma \left(\frac{a+3}{4}\right)}\right)^{(-1)^z} \exp \left(e^{\frac{i \pi }{2 z+1}}
   \Phi'\left(-e^{\frac{2 i \pi }{2 z+1}},0,\frac{1}{2} (2 a z+a+1)\right)\right)
\end{multline}
\end{example}
\begin{proof}
Use equation (\ref{eq:theorem}) and set $p=n=0,m=\pi$ and simplify in terms of the Hurwitz zeta function. Next take the first partial derivative with respect to $k$ and set $k=0$ and simplify in terms of the log-gamma function. Finally we take the exponential of both sides and simplify. Here $z$ is any positive integer greater than or equal to 1.
\end{proof}
\begin{example} 
An example in terms of the log-gamma function.
\begin{multline}
-(-1)^{2/3} \log \left(\frac{\Gamma \left(\frac{3}{8}\right) \left(\frac{\Gamma \left(\frac{5}{24}\right)}{\Gamma \left(\frac{17}{24}\right)}\right)^{\sqrt[3]{-1}} \left(\frac{\Gamma
   \left(\frac{25}{24}\right)}{\Gamma \left(\frac{13}{24}\right)}\right)^{(-1)^{2/3}}}{6 \Gamma \left(\frac{7}{8}\right)}\right)=\Phi'\left(-(-1)^{2/3},0,\frac{5}{4}\right)
\end{multline}
\end{example}
\begin{proof}
Use equation (\ref{eq:bdh2}) and set $a=1/2,z=1$ and simplify.
\end{proof}
%
%
%
\section{Discussion}
In this study, we employed a contour integration method to establish mathematical expressions for double finite sums involving the Hurwitz-Lerch zeta function. While this approach is generally easy, its application to the double finite sum of the secant function posed difficulties during evaluation. Our challenges encompassed both the simplification of the secant function's representation within the contour integral and the identification of specific values where the double product finite sum becomes simpler. The significance of this research lies in the deriving closed-form solutions through these methodologies. Consequently, we have introduced formulas to the existing body of knowledge, with the hope that they will prove valuable to the academic community

\section{Conclusion}
In this paper, we have presented a method for deriving double finite sum and products formulae involving the Hurwitz-Lerch zeta function along with some interesting special cases using both contour integration and well known algebraic techniques. We plan to apply these methods to derive other multiple sum and product formulae involving other special functions in future work.

\begin{thebibliography}{999}
%
\bibitem{prud}A. P. Prudnikov, Yu. A. Brychkov, and O. I. Marichev 
\emph{Integrals and Series: Elementary Functions}, Vol. 1. Gordon \& Breach Science Publishers, New York. \textbf{1986a }
%

\bibitem{prud3}A. P. Prudnikov, Yu. A. Brychkov, and O. I. Marichev 
\emph{Integrals and Series: More Special Functions}, Vol. 3. Gordon and Breach Science Publishers, New York. \textbf{1990}.
%

 \bibitem{reyn_ejpam}Reynolds, R., \& Stauffer, A.
\emph{A Note on the Infinite Sum of the Lerch function}. European Journal of Pure and Applied Mathematics, 15(1), 158–168. \textbf{2022}.
https://doi.org/10.29020/nybg.ejpam.v15i1.4137
%

\bibitem{reyn4} Reynolds, R.; Stauffer, A.
{A Method for Evaluating Definite Integrals in Terms of Special Functions with Examples}.  \emph{Int. Math. Forum} \textbf{2020}, \emph{15}, 235--244, 
doi:10.12988/imf.2020.91272 
%

\bibitem{hobson}Hobson, Ernest William. 
\emph{A Treatise on Plane Trigonometry}, United Kingdom: University Press, \textbf{1897}.
%

\bibitem{erd} Erd\'eyli, A.; Magnus, W.; Oberhettinger, F.; Tricomi, F.G.
\emph{Higher Transcendental Functions}; McGraw-Hill Book Company, Inc.: New York, NY, USA; Toronto, ON, Canada; London, UK, \textbf{1953}; Volume I.
%

\bibitem{grad} Gradshteyn, I.S.; Ryzhik, I.M.
\emph{Tables of Integrals, Series and Products}, 6th ed.; Academic Press: Cambridge, MA, USA, 
 \textbf{2000}.
%

\bibitem{gelca} Gelca, Răzvan., Andreescu, Titu.
\emph{Putnam and Beyond}. Germany: Springer International Publishing, \textbf{2017}.
 %
 
 \bibitem{atlas} Oldham, K.B.; Myland, J.C.; Spanier, J.
\emph{An Atlas of Functions: With Equator, the Atlas Function Calculator}, 2nd ed.; Springer: New York, NY, USA, \textbf{2009}.
%

\bibitem{krysicki}Krysicki, W. 
\emph{‘On Mieshalkin-Rogozin theorem and some properties of the second kind beta distribution’}, Discussiones Mathematicae Probability and Statistics, 20(2), p. 211. \textbf{2000}.
doi:10.7151/dmps.1012. 
%

\bibitem{kal}Kaluszka, M., Krysicki, W. 
\emph{On decompositions of some random variables}. Metrika 46, 159–175, \textbf{1997}. p. 214,
https://doi.org/10.1007/BF02717172
%

\bibitem{dlmf} Olver, F.W.J.; Lozier, D.W.; Boisvert, R.F.; Clark, C.W. (Eds.)
 \emph{NIST Digital Library of Mathematical Functions}; U.S. Department of Commerce, National Institute of Standards and Technology: Washington, DC, USA; Cambridge University Press: Cambridge, UK, \textbf{2010}; With 1 CD-ROM (Windows, Macintosh and UNIX). MR 2723248 (2012a:33001).
%

\bibitem{eric}Weisstein, Eric W.  
\emph{CRC Concise Encyclopedia of Mathematics}, CRC Press, \textbf{2002}, 3252 pages.
 %
 

\bibitem{guillera}Guillera, J. and Sondow, J. 
\emph{"Double Integrals and Infinite Products for Some Classical Constants Via Analytic Continuations of Lerch's Transcendent."}, 16 June \textbf{2005} 
http://arxiv.org/abs/math.NT/0506319.
%
\bibitem{gosper}Gosper, R.W.  
\emph{Experiments and Discoveries in q-Trigonometry}. In: Garvan, F.G., Ismail, M.E.H. (eds) Symbolic Computation, Number Theory, Special Functions, Physics and Combinatorics. Developments in Mathematics, vol 4. Springer, Boston, MA. \textbf{2001}.
https://doi.org/10.1007/978-1-4613-0257-5_6
%
\end{thebibliography}
\end{document}